\theoremstyle{definition} 
\newtheorem{thm}{Theorem}[section]
\newtheorem{prop}[thm]{Proposition}
\newtheorem{rem}[thm]{Remark}
\newtheorem{ex}[thm]{Example}
\newtheorem{defn}[thm]{Definition}
\title{Systolic Inequality and Scalar Curvature}
\author{Shunichiro Orikasa}
\address{Department of Mathematics, Graduate School of Science, Kyoto University, Sakyoku, Kyoto 606–8502, Japan}
\email{orikasa.shunichiro.34x@st.kyoto-u.ac.jp}
\date{\today}
\begin{document}

\begin{abstract}
We investigate the interaction between systolic geometry and positive scalar curvature through spinorial methods. Our main theorem establishes an upper bound for the two-dimensional stable systole on certain high-dimensional manifolds with positive scalar curvature under a suitable stretch-scale condition. The proof combines techniques from geometric measure theory, reminiscent of Gromov’s systolic inequality, with curvature estimates derived from the Gromov–Lawson relative index theorem. This approach provides a new framework for studying the relationship between positive scalar curvature metrics and systolic geometry in higher-dimensional manifolds.
\end{abstract}
\maketitle

\section{Introduction}

The study of systolic inequalities has evolved into a rich interplay between Riemannian geometry, topology, and geometric analysis. Originally introduced by Loewner and Pu for the torus and real projective plane, these inequalities relate the volume of a manifold to the minimal size of non-trivial cycles in its homology. Gromov’s foundational work \cite{gromov1992systoles} extended these ideas to higher dimensions and broader classes of manifolds, revealing deep geometric and topological obstructions to metric degenerations. A comprehensive survey can be found in Guth \cite{guth2010metaphors}. A particularly subtle phenomenon arises when one considers \emph{systolic freedom}---the failure of any universal inequality between volume and systoles. For instance, in the integral setting, Gromov \cite{gromov1992systoles} have constructed manifolds of bounded volume where certain homological systoles diverge. However, when coefficients are taken in $\mathbb{Z}/2$, the situation is significantly more delicate. Freedman \cite{freedman1999z2}, \cite{freedman2002z2} first exhibited $\mathbb{Z}/2$-systolic freedom, but only at polylogarithmic scales. Alpert, Balitskiy, and Guth \cite{alpert2024systolic} recently established that for any closed $n$-dimensional Riemannian manifold with bounded local geometry and nontrivial $\mathbb{Z}/2$-homology, the product $\mathrm{sys}_1(M;\mathbb{Z}/2) \cdot \mathrm{sys}_{n-1}(M;\mathbb{Z}/2)$ is \emph{almost rigid}: it cannot grow faster than 
$\mathrm{Vol}(M)^{1+\varepsilon}$ for arbitrarily small $\varepsilon > 0$. 

On the other hand, the geometric setting of positive scalar curvature (PSC) metrics offers another constraint on systolic phenomena. Richard \cite{richard20202} recently showed that positive scalar curvature metrics on $S^2\times S^2$ with large left stretch cannot have large left 2-systole and Zhu treated the case of 
$S^2\times T^n$ for $(n + 2 \leq 7)$ in \cite{zhu2020rigidity}.

In the three-dimensional setting, Stern \cite{stern2022scalar} introduced a new tool for relating scalar curvature to the global structure of closed 3-manifolds, based on the study
of harmonic one-forms with integral periods. It leads to the extension of a theorem of Kronheimer and Mrowka \cite{kronheimer1997scalar} characterizing the Thurston norm using the Seiberg–Witten equations and
the Weitzenb\"ock formula for the Dirac operator.

In this paper, we aim to bridge these perspectives by using spinor methods developed by Gromov-Lawson \cite{gromov1983positive}. The Gromov–Lawson relative index theorem says that if two complete Riemannian manifolds are identical outside compact sets, then the difference of their Dirac indices depends only on the geometry inside those compact sets. In other words, the index is a relative invariant determined locally by how the metrics differ, not by the infinite ends where they agree. 

In order to state our main theorem, we briefly introduce some terminology. 
A \emph{scalably contractible complex} is a partial generalization of enlargeable manifolds, 
characterized by the existence of arbitrarily contracting maps to spheres at large scales. 
For such a complex $K$, the \emph{distance sphere} $S_K$ denotes the boundary of a small tubular 
neighborhood around $K$. 
The metric condition in the theorem is formulated in terms of an \emph{adaptable metric}, 
meaning that the scalar curvature lower bound extends to the end-completed space. 
Finally, the invariant $L_{n-k}(X)$, called the $(n-k)$-\emph{stretch scale}, 
measures the minimal Lipschitz constant of maps from $X$ to spheres, 
and plays a role analogous to the hyperspherical radius.
The theorem below is the main result of the paper.
\begin{thm}
Let \( K \) be a scalably contractible complex of dimension \( k \), and let \( S_K \) denote a distance sphere around \( K \) of dimension \( n-2 \). Consider the trivial disk bundle \( X = S_K \times D^2 \), equipped with a Riemannian metric \( g \). Suppose $n$ is even and $g$ is adaptable.

One of the following statements holds:
\begin{itemize}
    \item the scalar curvature of \( X \) satisfies \( Sc(X) \leq C_{1,n}\cdot\sigma_X \),
    \item The stable relative 2-systole of \( X \) admits the upper bound
\[
\mathrm{stsys}_2(X, \partial X) \leq \frac{C_{2,n}}{C_{1,n}\cdot\sigma_X},
\]
\end{itemize}

where \( C_{1,n}, C_{2,n} \) is a constant depending only on the dimension \( n \), and
\[
\sigma_X = L_{n-k}(X)^{2}
\]
with \( L_{n-k}(X) \) denoting the \((n-k)\)-stretch scale of \( X \).
\end{thm}
This method opens up a new approach for studying the relationship between positive scalar curvature metrics and systolic geometry in higher-dimensional manifolds. Moreover, it is based on estimates involving the dual of the curvature 2-form of the spinor bundle, which reveals an intriguing similarity to the techniques of Kronheimer–Mrowka \cite{kronheimer1997scalar}. Our methods combine geometric measure theory which is an analogous method used in the proof of Gromov's systolic inequality for complex projective space $\mathbb{C}P^n$ and techniques from scalar curvature comparison using spinor methods.
\subsection*{Acknowledgment.}
The author would like to thank his advisor Professor Tsuyoshi Kato
for his constant encouragement and helpful suggestions. The authors wish to thank Demetre Kazaras for stimulating conversations related to this
work. In addition, he acknowledges the Joint Japan/US Collaborative Workshop on Geometric Analysis II for providing an inspiring environment and fruitful discussions that contributed to the development of this work.

\section{Preliminaries}
In this section, we briefly review some basic notions of systolic geometry and some results from geometric measure theory. The reader is referred to \cite{katz2007systolic} by Katz for a more details on systole.

Let \( M \) be a smooth, oriented closed Riemannian manifold. The \emph{comass norm} \( \|\omega\|^* \) of a differential \( k \)-form \( \omega \in \Omega^k(M) \) is defined as
\[
\|\omega\|^* := \sup \left\{ \omega(v_1, \dots, v_k) \;\middle|\; v_i \in T_pM,\ \|v_i\| = 1,\ p \in M \right\},
\]
i.e., the supremum of the value of \( \omega \) on orthonormal \( k \)-tuples.

Dually, for a \( k \)-dimensional singular chain \( c \), or more generally for a \( k \)-dimensional current \( T \), the \emph{mass norm} is defined by
\[
\|c\| := \sup \left\{ \langle \omega, c \rangle \;\middle|\; \omega \in \Omega^k(M),\ \|\omega\|_* \leq 1 \right\}.
\]

The mass norm on \( H_k(M;\mathbb{R}) \) is defined via
\[
\|\alpha\| := \inf \left\{ \|c\| \;\middle|\; [c] = \alpha \right\},
\]
where \( \|c\| \) denotes the mass of a singular cycle \( c \).

The comass norm on \( H^k(M;\mathbb{R}) \) is defined by
\[
\|\omega\|^* := \sup \left\{ \omega(v_1, \dots, v_k) \;\middle|\; v_i \in T_pM,\ \|v_i\| = 1,\ p \in M \right\}.
\]

This defines a norm on the homology \( H_k(M;\mathbb{R}) \), dual to the comass norm on cohomology \( H^k(M;\mathbb{R}) \) via the Kronecker pairing \cite{federer1974real}. 
Let \( L \subset \mathbb{R}^n \) be a full-rank lattice.
Define the shortest vector length by
\[
\lambda_1(L) := \min \{ \|v\| \mid v \in L \setminus \{0\} \}.
\]

Historically, the first lower bound on the volume of a Riemannian manifold in terms of its systole is due to Loewner.  

\begin{thm}[Loewner]
    For every Riemannian metric $g$ on the torus $T^2$, one has
    \[
        \operatorname{sys}_1(g)^2 \leq \gamma_2 \, \operatorname{area}(g).
    \]
\end{thm}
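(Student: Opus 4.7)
The plan is to reduce Loewner's inequality to two classical ingredients: the uniformization theorem for genus-one surfaces and Hermite's bound for rank-two lattices. First, by uniformization I write $g = f^{2} g_{0}$, where $g_{0}$ is a flat metric on $T^{2} = \mathbb{R}^{2}/\Lambda$ for some rank-two lattice $\Lambda \subset \mathbb{R}^{2}$ and $f > 0$ is a smooth conformal factor. Let $v \in \Lambda$ be a shortest non-zero vector, so that $|v| = \lambda_{1}(\Lambda)$, and complete it to a basis $(v,w)$ of $\Lambda$.

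Next I run a conformal length-averaging argument over the foliation of $T^{2}$ by those $g_{0}$-closed geodesics parallel to $v$. Parametrizing the transverse direction by $t$ over an interval of length $h = \operatorname{area}(g_{0})/\lambda_{1}(\Lambda)$, each such loop $\gamma_{t}$ is non-contractible, hence $\operatorname{sys}_{1}(g) \leq \operatorname{length}_{g}(\gamma_{t}) = \int_{\gamma_{t}} f \, ds_{0}$. Fubini's theorem gives
\[
\int_{0}^{h} \operatorname{length}_{g}(\gamma_{t}) \, dt \;=\; \int_{T^{2}} f \, dA_{0},
\]
so there exists some $t$ with $\operatorname{length}_{g}(\gamma_{t}) \leq h^{-1} \int_{T^{2}} f \, dA_{0}$. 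Cauchy--Schwarz then bounds
\[
\int_{T^{2}} f \, dA_{0} \;\leq\; \operatorname{area}(g_{0})^{1/2} \Bigl( \int_{T^{2}} f^{2} \, dA_{0} \Bigr)^{1/2} \;=\; \bigl(\operatorname{area}(g_{0}) \cdot \operatorname{area}(g)\bigr)^{1/2}.
\]
Squaring and inserting $h = \operatorname{area}(g_{0})/\lambda_{1}(\Lambda)$ yields
\[
\operatorname{sys}_{1}(g)^{2} \;\leq\; \frac{\lambda_{1}(\Lambda)^{2}}{\operatorname{area}(g_{0})} \cdot \operatorname{area}(g).
\]

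Finally, Hermite's theorem in dimension two asserts $\lambda_{1}(\Lambda)^{2} \leq \gamma_{2} \det(\Lambda) = \gamma_{2} \operatorname{area}(g_{0})$, with $\gamma_{2} = 2/\sqrt{3}$ and equality realized by the hexagonal lattice; substituting closes the estimate.

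The one conceptual step is uniformization: without identifying the conformal class of $g$ with a flat torus, there is no canonical system of parallel geodesic loops over which to average. Everything downstream is elementary, consisting of a Fubini computation, Cauchy--Schwarz, and a purely lattice-theoretic inequality. No substantial obstacle is expected; the content of the statement lies in the sharp constant $\gamma_{2}$, which is visibly the bottleneck in the chain of inequalities above and is attained by the flat hexagonal torus.
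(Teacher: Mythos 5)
The paper states Loewner's theorem without proof, citing it only as classical background, so there is no in-paper argument to compare against. Your proof is the standard and correct one: uniformization writes $g = f^2 g_0$ over a flat torus $\mathbb{R}^2/\Lambda$, averaging the $g$-lengths of the closed geodesics parallel to a shortest lattice vector via Fubini produces a non-contractible loop of length at most $h^{-1}\int_{T^2} f\, dA_0$ with $h = \operatorname{area}(g_0)/\lambda_1(\Lambda)$, Cauchy--Schwarz converts $\int f\, dA_0$ into $(\operatorname{area}(g_0)\operatorname{area}(g))^{1/2}$, and Hermite's bound $\lambda_1(\Lambda)^2 \leq \gamma_2 \det(\Lambda)$ with $\gamma_2 = 2/\sqrt{3}$ closes the estimate. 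All steps check out, including the non-contractibility of every leaf (the class of $v$ in $\pi_1(T^2) \cong \Lambda$ is nonzero) and the sharpness discussion for the hexagonal torus.
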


We begin by recalling the definition of the homology $k$-systole. For a Riemannian manifold $(M,g)$, the homology $k$-systole $\operatorname{sysh}_k(g)$ is the minimum $k$-volume among all nontrivial integer homology classes:
\[
    \operatorname{sysh}_k(g) = \inf\{ \operatorname{vol}_k(h) \; ; \; h \in H_k(M;\mathbb{Z}) \setminus \{0\} \}.
\]
In particular, $\operatorname{sysh}_{n-1}(g)$ is the infimum of the $(n-1)$-dimensional volumes of compact, orientable, non-separating hypersurfaces in $M$.

We denote by $H_k(M;\mathbb{Z})_{\mathbb{R}}$ the image of $H_k(M;\mathbb{Z})$ in $H_k(M;\mathbb{R})$, and by $h_{\mathbb{R}}$ the image of $h \in H_k(M;\mathbb{Z})$. Recall that $H_k(M;\mathbb{Z})$ forms a lattice in $H_k(M;\mathbb{R})$.

It is clear that
\[
    \|h_{\mathbb{R}}\| \leq \operatorname{vol}_k(h), \quad \forall h \in H_k(M;\mathbb{Z}).
\]

Federer \cite{federer1974real} studied the relation between $\|h_{\mathbb{R}}\|$ and $\operatorname{vol}_k(h)$, proving the following:

\begin{prop}
    For $h \in H_k(M;\mathbb{Z})$ with $1 \leq k < n$, we have
    \[
        \|h_{\mathbb{R}}\| = \lim_{j \to \infty} \, \frac{1}{j} \, \operatorname{vol}_k(jh).
    \]
    Moreover, if $k=n-1$, then
    \[
        \|h_{\mathbb{R}}\| = \operatorname{vol}_{n-1}(h).
    \]
\end{prop}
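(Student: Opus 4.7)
The plan is to establish the stable norm formula by pairing a subadditivity argument for the upper bound with a geometric-measure-theory approximation for the lower bound, then to treat codimension one separately using rigidity of mass minimizers.

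First, for the upper bound $\|h_\mathbb{R}\| \leq \lim_{j\to\infty} \tfrac{1}{j}\operatorname{vol}_k(jh)$, I would observe that $\|(jh)_\mathbb{R}\| \leq \operatorname{vol}_k(jh)$ follows immediately from the definition of the mass norm on integral cycles, and $\|(jh)_\mathbb{R}\| = j\|h_\mathbb{R}\|$ by linearity. The sequence $\{\operatorname{vol}_k(jh)\}_{j\geq 1}$ is subadditive: a disjoint union of representatives of $jh$ and $j'h$ gives an admissible representative of $(j+j')h$ whose volume is the sum. Fekete's lemma then yields both the existence of the limit and its identification with $\inf_j \tfrac{1}{j}\operatorname{vol}_k(jh)$.

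For the reverse inequality, I would use the fact that the mass norm on $H_k(M;\mathbb{R})$ can be computed as the infimum of the mass $\|S\|$ over real rectifiable cycles $S$ with $[S]=h_\mathbb{R}$. Given $\varepsilon>0$, fix such an $S$ with $\|S\| < \|h_\mathbb{R}\|+\varepsilon$. Applying the Federer--Fleming deformation theorem on a sufficiently fine triangulation of $M$, one deforms $S$ to a polyhedral cycle $S'$ whose coefficients can be taken rational (by a finite-dimensional density argument inside the polyhedral chain group) while satisfying $[S']=h_\mathbb{R}$ and $\|S'\| \leq \|S\|+\varepsilon$. Clearing denominators, there is a positive integer $j$ such that $jS'$ is an integral polyhedral cycle representing $jh$, giving $\operatorname{vol}_k(jh) \leq j\|S'\| \leq j(\|h_\mathbb{R}\|+2\varepsilon)$; dividing by $j$ and sending $\varepsilon\to 0$ completes the argument. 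I expect this approximation step to be the main obstacle, since Federer--Fleming must be applied carefully so that the homology class is preserved with a mass cost controlled by the mesh of the triangulation.

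For the case $k=n-1$, the additional input is that a mass-minimizing real $(n-1)$-cycle representing an integer class is itself integral. By compactness of integral currents a minimizer $T_0$ in the class $h$ exists, and by classical codimension-one regularity it is an embedded oriented hypersurface with integer multiplicities. Equivalently, a coarea-type argument applied to a defining function for any real competitor $S$ exhibits a generic integral level set lying in the class $h$ whose mass is at most $\|S\|$, so $\operatorname{vol}_{n-1}(h) \leq \|h_\mathbb{R}\|$. Combining with the trivial inequality $\|h_\mathbb{R}\| \leq \operatorname{vol}_{n-1}(h)$ yields equality, and consequently $jT_0$ is a mass-minimizing integral representative of $jh$, making the subadditive sequence exactly constant. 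This rigidity is special to top codimension and genuinely fails for $k \leq n-2$, which is why the stabilizing limit is necessary in the general case.
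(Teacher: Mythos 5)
The paper does not actually prove this proposition; it is quoted from Federer's \emph{Real flat chains, cohomology, and variational problems} without argument, so there is nothing in the text to compare against, and your outline follows what is essentially Federer's own route: subadditivity plus Fekete for the upper bound, polyhedral and rational approximation for the lower bound, and a coarea/decomposition argument in codimension one. The codimension-one part is sound as sketched --- the slicing argument producing an integral level set of mass at most $\|S\|$ is the standard mechanism --- though the claim that ``a mass-minimizing real $(n-1)$-cycle representing an integer class is itself integral'' is slightly too strong: minimizers in the real class need not be unique or integral (a convex combination of two integral minimizers is again a minimizer); what you actually use, and what the coarea argument delivers, is the \emph{existence} of an integral representative of $h$ with mass at most $\|h_{\mathbb{R}}\|$.

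Two steps in the lower bound for general $k$ need repair. First, the Federer--Fleming deformation theorem does not give $\mathbf{M}(S')\le \mathbf{M}(S)+\varepsilon$: for a cycle it gives a polyhedral $P$ with $\mathbf{M}(P)\le C(n,k)\,\mathbf{M}(S)$ for a dimensional constant $C>1$, and refining the mesh shrinks the flat distance $\mathcal{F}(S-P)$, not the multiplicative loss of mass. To obtain a polyhedral (hence, by the finite-dimensional density argument you describe, rational) cycle in the class $h_{\mathbb{R}}$ with mass within $\varepsilon$ of $\mathbf{M}(S)$, you need the strong polyhedral approximation theorem for normal currents, which produces $P_i$ with $\mathcal{F}(P_i-S)\to 0$ and $\mathbf{M}(P_i)\to\mathbf{M}(S)$; flat-norm closeness then forces $[P_i]=h_{\mathbb{R}}$ for large $i$. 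Second, after clearing denominators, $jS'$ is an integral cycle whose integral class maps to $(jh)_{\mathbb{R}}$ but may differ from $jh$ by a torsion element, so $\operatorname{vol}_k(jh)\le j\|S'\|$ does not immediately follow; this is fixed by multiplying $j$ further by the exponent of the torsion subgroup of $H_k(M;\mathbb{Z})$ (finite since $M$ is closed) and using that the Fekete limit equals the infimum over $j$. With these two adjustments the argument is complete and matches the classical proof.
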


Passing from integer cycles to real cycles, one obtains the stable norm:

\begin{defn}
    The stable $k$-systole $\operatorname{stsys}_k(g)$ of $(M,g)$ is defined by
    \[
        \operatorname{stsys}_k(g) = \min \bigl\{ \|h\| \; ; \; h \in H_k(M;\mathbb{Z})_{\mathbb{R}} \setminus \{0\} \bigr\} 
        = \lambda_1 \bigl(H_k(M;\mathbb{Z})_{\mathbb{R}}, \|\cdot\| \bigr)\}.
    \]
\end{defn}

In particular, one has $\operatorname{stsys}_{n-1}(g) = \operatorname{sysh}_{n-1}(g)$.

We now state Gromov’s systolic inequality for complex projective space, whose proof relies on the cup product decomposition of the fundamental class.

\begin{thm}[Gromov]
    Every Riemannian metric $g$ on complex projective space $\mathbb{C}P^n$ satisfies the sharp inequality
    \[
        \operatorname{stsys}_2(g)^n \leq n! \, \operatorname{vol}_{2n}(g),
    \]
    with equality attained for the Fubini--Study metric.
\end{thm}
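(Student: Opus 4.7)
The plan is to reduce the systolic inequality to a pointwise (Wirtinger-type) inequality for wedge powers of $2$-forms, using the duality between the stable norm on $H_2$ and the comass norm on $H^2$, together with the fact that the fundamental class of $\mathbb{C}P^n$ is detected by the top cup power of a degree-$2$ generator.

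First, I would fix the integral generator $\alpha \in H_2(\mathbb{C}P^n;\mathbb{Z})_\mathbb{R}$ and its dual cohomology generator $\omega \in H^2(\mathbb{C}P^n;\mathbb{Z})$, normalized so that $\langle \omega,\alpha\rangle = 1$ and $\langle \omega^n, [\mathbb{C}P^n]\rangle = 1$ (this is the familiar ring structure $H^*(\mathbb{C}P^n;\mathbb{Z}) = \mathbb{Z}[\omega]/(\omega^{n+1})$). Since $H^2(\mathbb{C}P^n;\mathbb{R})$ is one-dimensional, the Federer duality between stable norm and comass norm collapses to the single identity
\[
\operatorname{stsys}_2(g) \cdot \inf_{[\eta]=\omega} \|\eta\|^{*} = 1,
\]
where the infimum ranges over smooth closed $2$-forms $\eta$ representing $\omega$. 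I would then choose a minimizing (or nearly minimizing) representative $\eta$ in the cohomology class of $\omega$, so that $\|\eta\|^{*}$ is arbitrarily close to $1/\operatorname{stsys}_2(g)$.

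Next, I would invoke the Wirtinger inequality: for any $2$-form $\eta$ on a $2n$-dimensional inner product space, the comass of its $n$-th wedge power satisfies
\[
\|\eta^{n}\|^{*} \leq n!\, (\|\eta\|^{*})^{n},
\]
with equality exactly when $\eta/\|\eta\|^{*}$ is a Kähler form in some orthonormal frame. Applying this pointwise on $(\mathbb{C}P^n, g)$ and integrating the identity $\int_{\mathbb{C}P^n} \eta^n = \langle \omega^n,[\mathbb{C}P^n]\rangle = 1$ yields
\[
1 = \int_{\mathbb{C}P^n} \eta^{n} \leq \|\eta^{n}\|^{*} \cdot \operatorname{vol}_{2n}(g) \leq n!\, (\|\eta\|^{*})^{n}\, \operatorname{vol}_{2n}(g).
\]
Substituting the duality identity above and taking the infimum over representatives $\eta$ delivers $\operatorname{stsys}_2(g)^{n} \leq n!\, \operatorname{vol}_{2n}(g)$, as desired.

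The main obstacle, and the only non-formal ingredient, is the Wirtinger inequality together with its equality case; everything else is bookkeeping with duality. For the sharpness statement one checks that the Fubini--Study Kähler form $\omega_{\mathrm{FS}}$ saturates Wirtinger pointwise (it \emph{is} a Kähler form in every tangent space) and that its comass $1/\operatorname{stsys}_2$ is indeed attained by $\omega_{\mathrm{FS}}$ itself, because a harmonic representative is comass-minimizing in this one-dimensional cohomology. A subtle point worth verifying is that the infimum of comass in the cohomology class is actually realized (or can be approached by smooth forms) so that the chain of inequalities closes; this is standard for de Rham cohomology on a closed manifold but deserves a one-line justification via a mollification argument.
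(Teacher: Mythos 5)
Your proposal follows essentially the same route as the paper's own sketch: pick the degree-$2$ generator $\omega$ dual to $\alpha$, use the normalization $\int_{\mathbb{C}P^n}\eta^n = 1$ together with the pointwise Wirtinger inequality $\|\eta^n\|^* \leq n!\,(\|\eta\|^*)^n$, and close the argument via the Federer duality $\|\alpha\|\cdot\|\omega\|^* = 1$ between the stable norm and the comass norm. The only additions are your (correct) remarks on the equality case for the Fubini--Study metric and on approximating the comass infimum by smooth representatives, which the paper's sketch omits.
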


Here, we recall the sketch of proof.
Let $\alpha \in H_2(\mathbb{C}P^n;\mathbb{Z}) \cong \mathbb{Z}$ be a generator, and let $\omega \in H^2(\mathbb{C}P^n;\mathbb{Z}) \cong \mathbb{Z}$ be its dual in cohomology. Then the cup power $\omega^n$ generates $H^{2n}(\mathbb{C}P^n;\mathbb{Z}) \cong \mathbb{Z}$. Choose a differential $2$-form $\eta$ representing $\omega$. Then
\[
    1 \;\leq\; \int_{\mathbb{C}P^n} \|\eta^n\| \, d\mathrm{vol} 
    \;\leq\; n! \, \|\eta\|^n \, \operatorname{vol}_{2n}(\mathbb{C}P^n,g),
\]
where the last inequality follows from the Wirtinger inequality.  
Since $(H_2(M;\mathbb{Z})_{\mathbb{R}}, \|\cdot\|)$ and $(H^2(M;\mathbb{Z})_{\mathbb{R}}, \|\cdot\|^*)$ are dual normed lattices, one has $\|\alpha\|\cdot\|\omega\|^*=1$. This yields the desired result.

\section{Enlargeability and Scalability}
We recall the key concept of \emph{enlargeability}, introduced by Gromov and Lawson \cite{gromov1983positive}. This notion has recently attracted attention in connection with Schoen and Yau’s minimal surface techniques \cite{li2025coveringinstabilityexistencepositive}. In this paper, we define a partial generalization of this concept.
\subsection{Enlargeable Manifolds}
\begin{defn}[$\epsilon$-Contracting Map]
A map $f:X \to Y$ between Riemannian manifolds is \emph{$\epsilon$-contracting} for a given $\varepsilon>0$ if 
\[
\|f_* v\| \le \varepsilon \|v\|
\] 
for all tangent vectors $v$ in $X$. Equivalently, for any piecewise smooth curve $y \subset X$,
\[
\text{length}(f(y)) \le \varepsilon \, \text{length}(y).
\]
\end{defn}
While $\epsilon$-contracting maps are easy to construct in general (e.g., constant maps), if $X$ and $Y$ are compact oriented manifolds of the same dimension, the existence of a non-zero degree $\epsilon$-contracting map $f: X \to Y$ implies that $X$ is ``larger than'' $Y$ on the order of $1/\varepsilon$.

\begin{defn}[Enlargeable Manifold]
A compact oriented Riemannian $n$-manifold $X$ is \emph{enlargeable} if for every $\varepsilon > 0$, there exists an spin Riemannian covering space $\tilde{X}$ that admits an $\epsilon$-contracting map to the standard $n$-sphere $S^n(1)$, which is constant outside a compact set and of non-zero degree.
\end{defn}

\begin{rem}
A map is constant at infinity if it is constant outside some compact set. The degree is defined via integration of an $n$-form with non-zero integral on $S^n$.
\end{rem}

\begin{ex}
The flat torus $T^n = \mathbb{R}^n / \mathbb{Z}^n$ is enlargeable. For each integer $k > 0$, the covering torus $T^n_k = \mathbb{R}^n / (k \mathbb{Z})^n$ admits a $(\pi/k)$-contracting map to $S^n(1)$ of degree 1.
\end{ex}

We summarize here the key properties of enlargeable manifolds. The class of enlargeable manifolds is closed under products, connected sums, and homotopy changes. It contains many important $K(\pi,1)$-manifolds as basic examples \cite{gromov1983positive}, \cite{lawson2016spin}.
\begin{thm}
In the category of compact manifolds, the following hold:
\begin{enumerate}
\item Enlargeability is independent of the Riemannian metric.
\item Enlargeability depends only on the homotopy type.
\item The product of enlargeable manifolds is enlargeable.
\item The connected sum of any manifold with an enlargeable manifold is enlargeable.
\item Any manifold admitting a map of non-zero degree onto an enlargeable manifold is itself enlargeable.
\end{enumerate}
\end{thm}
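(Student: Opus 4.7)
The plan is to prove each part by pulling back an $\varepsilon$-contracting map along an appropriate geometric or topological operation and then checking four things at every stage: (a) the new domain is still a spin Riemannian cover of the new base manifold, (b) the contracting constant can be made arbitrarily small as $\varepsilon \to 0$, (c) the composed map is still constant outside a compact set, and (d) the degree remains nonzero. Since spin structures pass to coverings, are preserved under products, and behave well under connected sum with a spin piece, condition (a) will be automatic in all cases and I will focus on (b)--(d).

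\textbf{Parts (1) and (2).} Any two Riemannian metrics on a compact manifold are bi-Lipschitz equivalent, so an $\varepsilon$-contracting map with respect to one metric is $C\varepsilon$-contracting with respect to any other; since $\varepsilon$ is arbitrary, metric independence follows. For homotopy invariance, let $X$ be enlargeable and let $h \colon Y \to X$ be a homotopy equivalence, with some Lipschitz constant $L$. Given an $\varepsilon$-contracting $f \colon \tilde X \to S^n$ from a cover of $X$, form the pullback cover $\tilde Y \to Y$ and lift $h$ to $\tilde h \colon \tilde Y \to \tilde X$; the composition $f \circ \tilde h$ is $L\varepsilon$-contracting, and properness of $\tilde h$ (coming from compactness of the base and the covering property) shows that it is constant at infinity. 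Nonzero degree is preserved because $h$ has degree $\pm 1$.

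\textbf{Parts (3) and (5).} For (5), a nonzero-degree map $f \colon M \to X$ lifts on the pullback cover $\tilde M \to M$ to a proper map $\tilde f \colon \tilde M \to \tilde X$ of the same nonzero degree, and composing with the $\varepsilon$-contracting map of $\tilde X$ gives the desired map with Lipschitz constant controlled by $L(f)\varepsilon$. For (3), take enlargeable $X^n$ and $Y^m$ with $\varepsilon$-contracting maps $\tilde X \to S^n$ and $\tilde Y \to S^m$. Their product maps into $S^n \times S^m$, and composition with the standard degree-one collapse $S^n \times S^m \to S^{n+m}$ (which has a dimension-dependent Lipschitz constant $c_{n,m}$) gives an $(c_{n,m}\varepsilon)$-contracting map from $\tilde X \times \tilde Y$ to $S^{n+m}$. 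The degree is the product of the two degrees times one, hence nonzero, and the constant-at-infinity property is inherited from the factors.

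\textbf{Part (4).} For $M \# X$ with $X$ enlargeable, one uses the collapse map $M \# X \to X$ that crushes the $M$-summand to a point, and takes the covering of $M \# X$ induced from an enlargeable cover $\tilde X \to X$. This induced cover looks like $\tilde X$ with a copy of $M$ glued to each point in the preimage of the connect-sum region. Since the given $\varepsilon$-contracting $f \colon \tilde X \to S^n$ is already constant outside a compact set, one extends $f$ over each attached $M$-piece by its constant value; the result remains $\varepsilon$-contracting, is still constant at infinity, and has the same degree as $f$.

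\textbf{Main obstacle.} The most delicate point is ensuring the constant-at-infinity condition is preserved throughout, especially in (2), where one must check that the lift of a homotopy equivalence is proper with uniform Lipschitz control on the cover, and in (4), where one must choose the covering so that the extension over the glued-in $M$-pieces does not disturb either the contracting constant or the degree. Once these properness and gluing issues are resolved, each conclusion reduces to tracking Lipschitz constants and degrees under the corresponding construction.
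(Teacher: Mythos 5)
The paper does not actually prove this theorem; it quotes it as a known result of Gromov--Lawson, citing \cite{gromov1983positive} and \cite{lawson2016spin}, so there is no in-paper argument to compare against. Your overall strategy --- reduce everything to tracking Lipschitz constants, properness, and degree under lifts to pullback covers, deduce (2) and (4) from (5), and handle (3) by smashing $S^n \times S^m \to S^{n+m}$ --- is exactly the standard route taken in those references, and parts (1), (2), (3), (5) are essentially correct as sketched.

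There is, however, a genuine error in your part (4). You propose to extend the given $\varepsilon$-contracting map $f \colon \tilde X \to S^n$ over each glued-in copy of $M$ ``by its constant value.'' But the connect-sum sphere sits at the boundary of a ball $B \subset X$ that may lie entirely inside the compact set on which $f$ is non-constant; the restriction of $f$ to a lift of $\partial B$ is then a non-constant map, and no constant extension over the attached $M$-piece is even continuous. The correct argument is the one your first sentence gestures at: lift the degree-one collapse $c \colon M \# X \to X$ (which maps each $M$-minus-ball piece \emph{onto} the ball $B$, not to a point) to the induced cover, and compose; on each attached piece the composite is $f$ restricted to a lift of $B$ precomposed with $\tilde c$, which is $L(c)\varepsilon$-contracting rather than constant. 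In other words, (4) must genuinely go through (5) via the collapse map; the ``extend by a constant'' shortcut does not exist. Separately, note that with the paper's definition of enlargeability (which requires the covering spaces to be \emph{spin}), your claim that condition (a) is ``automatic in all cases'' is not justified in (4) and (5): the induced cover of $M \# X$ contains copies of $M$ minus a ball, and nothing forces these to be spin for an arbitrary $M$. This hypothesis needs to be either imposed on $M$ or addressed explicitly.
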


\subsection{Scalably Contractible Complexes and Review from PL geometry}
In this section, we introduce a concept that partially generalizes enlargeability and discuss its construction and related properties.
\begin{defn}
Let \( X \) be a connected, finite CW complex of dimension \( n \), equipped with a metric \( d \). 
We say that \( X \) is \emph{scalably contractible} if for every \( \varepsilon > 0 \), there exists a finite covering space 
\[
p : \tilde{X}_\varepsilon \to X
\]
and an \( \varepsilon \)-Lipschitz map
\[
f_\varepsilon : \tilde{X}_\varepsilon \to S^n
\]
such that: $\displaystyle\sum_{e^n_\lambda}[f(e^n_\lambda):S^n]$ is non-zero where $e^n_\lambda$ are top-dimensional cells of $X$ with orientation and $[f(e^n_\lambda):S^n]$ is is the attaching coefficient.
\end{defn}
If \( X \) is a closed \( n \)-dimensional Riemannian manifold, this space satisfies the classical definition of enlargeability by Gromov-Lawson.

In order to state the main theorem, we introduce some terminology from PL geometry. In the following, all simplicial complexes are assumed to be finite and embedded in Euclidean space. 

Let \( K \) be a finite abstract simplicial complex of dimension \( n \). Suppose \( |K| \hookrightarrow \mathbb{R}^{N} \) is a topological embedding into some Euclidean space in such a way that:

\begin{itemize}
  \item The \emph{interior} of each simplex \( \sigma^k \in K \) is smoothly embedded,
  \item Simplices intersect only along common faces.
\end{itemize}

For \( \varepsilon > 0 \), the \emph{$\epsilon$-neighborhood} of \( K \) is defined as:
\[
U_\varepsilon(K) := \{ x \in \mathbb{R}^n \mid \operatorname{dist}(x, K) < \varepsilon \},
\]
where \( \operatorname{dist}(x, K) = \inf_{y \in K} \|x - y\| \) is the Euclidean distance.

The \emph{boundary} of the $\epsilon$-neighborhood is defined as:
\[
\partial U_\varepsilon(K) := \overline{U_\varepsilon(K)} \setminus \operatorname{Int}(U_\varepsilon(K)).
\]

If \( K \) is a smooth submanifold (or embedded piecewise linearly), and \( \varepsilon \) is sufficiently small, then \( \partial U_\varepsilon(K) \) is a smooth \((n-1)\)-dimensional hypersurface in \( \mathbb{R}^n \), called the \emph{distance sphere} around \( K \).

Consider a regular neighborhood (also called a \emph{thickening}) \( U \subset \mathbb{R}^{N+1} \) of \( |K| \), which is an \( N \)-dimensional manifold with boundary satisfying:
\[
|K| \subset U, \quad \text{and} \quad U \simeq |K| \text{ (homotopy equivalence)}.
\]

In particular, the above homotopy equivalence is given by a deformation retract $r_K: U\to |K|$. 
Then the boundary \( \partial U \) can be decomposed into pieces corresponding to the simplices of \( K \). Specifically, to each \( k \)-simplex \( \sigma^k \in K \), we assign a block of the form:
\[
S^{N-k} \times D^k,
\]
where:
\begin{itemize}
  \item \( D^k \) corresponds to the simplex \( \sigma^k \),
  \item \( S^{N-k} \) is the normal sphere around \( \sigma^k \) in the ambient space.
\end{itemize}

The boundary \( \partial U \) is obtained by gluing these blocks along their boundaries, according to the face relations among the simplices of \( K \). We call \( \partial U\) as a \emph{distance sphere of $K$} and denote it by $S_K$.
\begin{ex}
Consider a finite graph \( G = (V, E) \). We define a topological space by assigning:

\begin{itemize}
  \item to each vertex \( v \in V \), a copy of the \( n \)-sphere \( S^n \), and
  \item to each edge \( e = \{v, w\} \in E \), a connected sum operation between the spheres \( S^n_v \) and \( S^n_w \) assigned to the endpoints.
\end{itemize}

More precisely, for each edge \( e = \{v, w\} \), we perform the connected sum \( S^n_v \# S^n_w \) by removing a small open ball from each sphere and identifying their boundary spheres via a homeomorphism:
\[
S^n_v \# S^n_w = \left( S^n_v \setminus B^n \right) \cup_{\varphi_e} \left( S^n_w \setminus B^n \right),
\]
where \( \varphi_e : S^{n-1} \to S^{n-1} \) is a gluing map along the boundary spheres.

By performing this operation for every edge in \( G \), we obtain a space \( X( G) \) formed as the connected sum of spheres arranged according to the graph structure of \( G \). (This is called the connected sum along the graph \( G \).) This construction provides an abstract way for obtaining distance spheres of graphs.
\end{ex}
We introduce two notions to handle metrics that can be extended to the ends in a controlled way. The following concept, \emph{stretch scale}, is analogous to the hyperspherical radius.

\begin{defn}[Stretch scale]
Let $M$ be the trivial 2-disk bundle, oriented and endowed with Riemannian metric $g$. The $l$-\emph{stretch scale} of $M$, denoted $L_l(M)$, 
is defined as the smallest Lipschitz constant of a bundle map
\[
  f \colon M \longrightarrow S^{\,l-2} \times D^2
\]
with non-zero degree.
\end{defn}

\begin{defn}[Adaptable metric]
Let $X$ be a smooth manifold with bounbary. A metric $g$ on $X$ is called \emph{adaptable} if there exists 
a complete extension $\bar{g}$
of $g$ over the end-completed space with $Sc(\bar{g})\geq \min Sc(g)$
\[
  X' := X\,\cup\, \bigl(\partial X\times [0,+\infty)\bigr).
\]
\end{defn}

\section{Proof of Main Theorem}
In this section, we prove the main theorem of this paper, which provides an estimate for the stable two-dimensional relative systole on high-dimensional Riemannian manifolds with  positive scalar curvature metrics.
\begin{thm}
Let \( K \) be a scalably contractible complex of dimension \( k \), and let \( S_K \) denote a distance sphere around \( K \) of dimension \( n-2 \). Consider the trivial disk bundle \( X = S_K \times D^2 \), equipped with a Riemannian metric \( g \). Suppose $n$ is even and $g$ is adaptable.

One of the following statements holds:
\begin{itemize}
    \item the scalar curvature of \( X \) satisfies \( Sc(X) \leq C_{1,n}\cdot\sigma_X \),
    \item The stable relative 2-systole of \( X \) admits the upper bound
\[
\mathrm{stsys}_2(X, \partial X) \leq \frac{C_{2,n}}{C_{1,n}\cdot\sigma_X},
\]
\end{itemize}

where \( C_{1,n}, C_{2,n} \) is a constant depending only on the dimension \( n \), and
\[
\sigma_X = L_{n-k}(X)^{2}
\]
with \( L_{n-k}(X) \) denoting the \((n-k)\)-stretch scale of \( X \).
\end{thm}

\begin{proof}
Suppose that  the scalar curvature of \( X \) satisfies \( Sc(X) >C_{1,n}\cdot\sigma_X \).
We define a constant $C_{2,n}$ as $2^{n/2 + 1}$. It is determined in equation (1). Suppose, by contradiction, that 
\[
\mathrm{stsys}_2(X, \partial X) \leq \frac{C_{2,n}}{C_{1,n}\cdot\sigma_X},
\]
Fix any sufficiently small $\epsilon > 0$. Since $K$ is a scalably contractible complex, there exists a finite covering 
\[
p : \tilde{K}_\epsilon \to K
\]
and an \( \epsilon \)-Lipschitz map
\[
f_\epsilon : \tilde{K}_\epsilon \to S^k
\]
such that $\displaystyle\sum_{e^n_\lambda}[f(e^n_\lambda):S^k]$ is non-zero where $e^n_\lambda$ are top-dimensional cells of $X$ and $[f(e^n_\lambda):S^k]$ is the attaching coefficient.

By considering a distance sphere in $K_\epsilon$, there exists a finite covering $\pi: \tilde{X}_\epsilon\to X$ of $X$ and a map 
\[
\tilde{r}: \tilde{X}_\epsilon \to \tilde{K}_\epsilon
\]
lifting the retraction \( r : X \to K \). The Lipschitz constant of \( \tilde{r} \) is bounded above by a constant \( c(r) \) depending only on \( r \), and independent of \( \epsilon \).

Now consider the following composition:
\[
f': 
\tilde{X}_\epsilon 
\xrightarrow{\tilde{r} \times \varphi} \tilde{K}_\epsilon \times S^{n-k-2} \times D^2 
\xrightarrow{f_\epsilon \times \mathrm{id}} S^k \times S^{n-k-2} \times D^2 
\xrightarrow{\wedge \times \mathrm{id}} S^{n-2} \times D^2.
\]
where $\varphi$ is the minimizer which attains $L_{n-k}(X)$ (precisely we take a minimizing sequence of maps). 

Next, we fix an embedding
\[
\iota: (S^{n-2} \times D^2,\; S^{n-2} \times S^1) \to (S^n, \Sigma),
\]
where \( \Sigma \subset S^n \) is a loop. We assume that \( \iota \) is chosen to minimize the Lipschitz constant among all such embeddings and all choices of loops \( \Sigma \). We define $F'=\iota\circ f'$.

For each point \( x \in \tilde{X}_\epsilon \), the differential
\[
dF'_x : T_x \tilde{X}_\epsilon \to T_{F'(x)} S^n
\]
is a linear map between finite-dimensional inner product spaces. By the singular value decomposition, there exist orthonormal bases
\[
\{ a_1, \dots, a_n \} \subset T_x \tilde{X}_\epsilon, \quad \{ b_1, \dots, b_n \} \subset T_{F'(x)} S^n
\]
such that
\[
dF'_x(a_i) = \mu_i b_i \quad \text{for } i = 1, \dots, n,
\]
where \( \mu_1 \geq \mu_2 \geq \cdots \geq \mu_n \geq 0 \) are the singular values of \( dF'_p \).

Since \( f_\epsilon \) satisfies \( \operatorname{Dil}_1(f_\epsilon) \leq \epsilon \), we have
\[
L_{n-k}(X) \geq \mu_1 \geq \mu_2,
\]
and
\[
\sum_{2 < j \leq n} \mu_j = O(\epsilon).
\]

Therefore, the exterior square of the differential,
\[
\wedge^2 dF'_x : \wedge^2 T_x \tilde{X}_\epsilon \to \wedge^2 T_{F'(x)} S^n,
\]
has the principal singular value \( \mu_1 \mu_2 \leq L_{n-k}(X)^2 \), and all other singular values satisfy
\[
\mu_{j_1} \mu_{j_2} = O(\epsilon),
\quad \text{for } 2 < j_1 < j_2 \leq n.
\]

We consider the spinor bundle \( E_0 \) over the unit sphere \( S^n \) and the spinor bundle $\slashed{S}$ over $\tilde{X}_\epsilon$.  
Define the pullback bundle induced by the map \( F': \tilde{X}_\epsilon \to S^n \)
\[
E := (F')^* E_0.
\]

Let \( R_E \) denote the curvature term appearing in the Weitzenb\"ock formula for the square of the twisted Dirac operator on \( \slashed{S} \otimes E \), so that for any section \( s \in \Gamma(\slashed{S} \otimes E) \),
\[
D^2_{\slashed{S} \otimes E} s
= (\nabla^{\slashed{S} \otimes E})^* \nabla^{\slashed{S} \otimes E} s + \frac{1}{4} Sc(g) s + R_E s.
\]

The following estimate for the curvature term \( R_E \) is well known:
\[
|R_E s| \leq \frac{1}{4} \sum_{\substack{1 \leq j, k \leq n \\ j \neq k}} \mu_j \mu_k \, |s|,
\]
for all \( s \in \slashed{S}_x \otimes E_x \). Thus $|R_E|_\infty \leq C_{1,n}\cdot\frac{\mu_1\mu_2}{2}$ for sufficiently small $\epsilon >0$. ($C_{1,n}$ is determined by $\wedge$ and $\iota$.)

Next we construct the hermitian vector bundle $F$ relative to $E$ so that the curvature of $F$ satisfies $||R_F||\leq C_{1,n}\cdot L_{n-k}(X)^2/2$. 

We reduce the problem to constructing a differential 1-form \( \omega \) such that 
\[
\omega = d\theta \quad \text{near } \partial X,
\]
and the 2-form \( d\omega \) satisfies the comass norm estimate
\[
\|d\omega\| \leq C_{1,n}\cdot2^{-n/2 - 1} \cdot L_{n-k}(X)^2.
\]
(Here, we consider the deformation of the map $F'$ along the loop as in \cite{orikasa2025analysiscontractionmappingscomplement} Theorem 3.7. and use the splitting of $E$ and reduce to the problem on the base space, since the problem is local.)

Let us consider a 2-cycle 
\[
[D^2] \in H_2(X, \partial X; \mathbb{R}),
\]
and define a class 
\[
u \in H^2_{\mathrm{cpt}}(\mathring{X}; \mathbb{R})
\]
to be the dual of \([D^2]\) under the Kronecker pairing. 

We describe a de Rham representative of $u$ in \( H^2_{\mathrm{cpt}}(\mathring{X}; \mathbb{R}) \). Since each fiber of \( X\) is isomorphic to \( D^2\), we may introduce polar coordinates \( (r, \theta) \) on each fiber, where \( r > 0 \) and \( \theta \in S^1 \). Then, we define the \emph{angle form} as
\[
\frac{1}{2\pi} d\theta,
\]
which is a closed 1-form along the fiber direction and integrates to 1 over each circle fiber.

To construct a compactly supported 2-form representing $u$, let \( r: X\setminus \{0\} \to \mathbb{R}_{>0} \) be the radial coordinate function. 

Let \( \rho(r) \colon [0, \infty) \to [0, 1] \) be a smooth function such that:
\[
\rho(r) =
\begin{cases}
0 & \text{for } r < \varepsilon, \\
1 & \text{for } r > 2\varepsilon,
\end{cases}
\]
with a smooth transition in between. 

Then a representative of \( u \) can be given by the differential form
\[
u = \left[ d\left( \rho(r) \cdot \frac{1}{2\pi} d\theta \right) \right],
\]
where \( \rho(r) \) is a smooth cutoff function equal to 1 near \( \partial X \) and supported away from the interior.

Since 
\[
\operatorname{stsys}_2(X,\partial X) = \|[D^2]\| > \frac{C_{2,n}}{C_{1,n}\cdot\sigma_X},
\]
by duality we obtain
\begin{equation}\label{eq:ref}
    \|u\|^* < \frac{C_{1,n}\cdot\sigma_X}{ C_{2, n}}.
\end{equation}

(Therefore, we choose $C_{2,n}$ as $2^{n/2 + 1}$.)

Hence, there exists a compactly supported 1-form \( \eta \in \Omega^1_{\mathrm{cpt}}(\mathring{X}) \) such that
\[
\left\| d\left( \rho(r) \cdot \frac{1}{2\pi} d\theta + \eta \right) \right\| < \frac{C_{1,n}\cdot\sigma_X}{ C_{2,n}}.
\]
Since \( C_{2,n} = 2^{n/2 + 1} \), we obtain
\[
\left\| d\left( \rho(r) \cdot \frac{1}{2\pi} d\theta + \eta \right) \right\| < 2^{-n/2 - 1} \cdot C_{1,n}\cdot\sigma_X.
\]

Now we define the 1-form
\[
\omega = \rho(r) \cdot \frac{1}{2\pi} d\theta + \eta.
\]
This form satisfies \( \omega = d\theta \) near \( \partial X \), since \( \rho(r) \equiv 1 \) there and \( \eta \) is supported away from the boundary. Therefore, \( \omega \) satisfies both the desired boundary behavior and the comass norm estimate.

Since \( g \) is an adaptable metric, there exists a complete extension \( \bar{g} \) of \( g \) over the end-completed space
\[
X'_\epsilon := X_\epsilon \cup (\partial X_\epsilon \times [0, +\infty)),
\]
and there exists a relative pair \( (\pi_* E, F) \) of vector bundles over \( X'_\epsilon \). By the relative index theorem, we have
\[
\operatorname{ind}(\slashed{D}_E) - \operatorname{ind}(\slashed{D}_F) = \int_{X'_\epsilon} \hat{A}(T X'_\epsilon) \wedge \operatorname{ch}(\pi_* E, F).
\]

On the other hand, from vanishing results, we know that
\[
\operatorname{ind}(\slashed{D}_E) = \operatorname{ind}(\slashed{D}_F) = 0
\quad \text{and} \quad 
\int_{X'_\epsilon} \hat{A}(T X'_\epsilon) \wedge \operatorname{\hat{ch}}(F) = 0,
\]
by the construction of \( F \) and use $u^2=0$ as a cohomology class where $\operatorname{\hat{ch}}(F)$ is the reduced chern character of $F$.

However, we compute
\begin{align*}
\int_{X'_\epsilon} \hat{A}(T X'_\epsilon) \wedge \operatorname{\hat{ch}}(\pi_* E)
&= \int_{X_\epsilon} \hat{A}(T X_\epsilon) \wedge \operatorname{\hat{ch}}(\pi_* E) \\
&= \int_{\widetilde{X}_\epsilon} \hat{A}(T \widetilde{X}_\epsilon) \wedge \operatorname{\hat{ch}}(E) \\
&= \int_{\widetilde{X}_\epsilon} \hat{A}(T \widetilde{X}_\epsilon) \wedge (F')^* \operatorname{\hat{ch}}(E_0) \\
&= \deg(F') \cdot \int_{S^n} \operatorname{\hat{ch}}(E_0) \\
&= \deg(F') \neq 0.
\end{align*}

This contradicts the vanishing of the relative index, completing the proof.
\end{proof}

\bibliographystyle{amsalpha}
\bibliography{reference3.bib}
\end{document}